%2multibyte Version: 5.50.0.2953 CodePage: 1254
\documentclass{amsart}%
\usepackage{amssymb}
\usepackage{cite}
\usepackage{amsfonts}
\usepackage{amsmath}
\usepackage{graphicx}%
\setcounter{MaxMatrixCols}{30}
%TCIDATA{OutputFilter=latex2.dll}
%TCIDATA{Version=5.50.0.2953}
%TCIDATA{Codepage=1254}
%TCIDATA{CSTFile=amsartci.cst}
%TCIDATA{Created=Monday, November 12, 2018 14:39:50}
%TCIDATA{LastRevised=Thursday, April 25, 2019 11:17:45}
%TCIDATA{<META NAME="GraphicsSave" CONTENT="32">}
%TCIDATA{<META NAME="SaveForMode" CONTENT="1">}
%TCIDATA{BibliographyScheme=Manual}
%TCIDATA{<META NAME="DocumentShell" CONTENT="Articles\SW\AMS Journal Article">}
%BeginMSIPreambleData
\providecommand{\U}[1]{\protect\rule{.1in}{.1in}}
%EndMSIPreambleData
\newtheorem{theorem}{Theorem}
\theoremstyle{plain}

\newtheorem{lemma}{Lemma}

\newtheorem{remark}{Remark}

\numberwithin{equation}{section}
\begin{document}
\title[$q$-generalizations of Meyer-K\"{o}n\.{i}g and Zeller Operators]{Korovkin type Approximation of Abel transforms of $q$-Meyer-K\"{o}n\.{i}g and
Zeller operators}
\author{D\.{i}lek S\"oylemez}
\email{dsoylemez@ankara.edu.tr}
\author{Mehmet \"{U}nver}
\address{Ankara University, Elmada\u{g} Vocational School, Department of Computer Programming,\\
06780 Ankara, Turkey\\
Ankara University, Faculty of Science, Department of Mathematics,\\
06100 Ankara TURKEY}
\email{munver@ankara.edu.tr}
\subjclass[2010]{40A35, 40G10, 41A36}
\keywords{Meyer-K\"{o}nig and Zeller Operators, Abel convergence, rate of convergence}

\begin{abstract}
In this paper we investigate some Korovkin type approximation properties of
the $q$-Meyer-K\"{o}nig and Zeller operators and Durrmeyer variant of \ the
$q$-Meyer-K\"{o}nig and Zeller operators via Abel summability method which is
a sequence-to-function transformation and which extends the ordinary
convergence. We show that the approximation results obtained in this paper are
more general than some previous results. Finally, we obtain the rate of Abel
convergence for the corresponding operators.

\end{abstract}
\maketitle

\section{Preliminaries}

Korovkin type approximation theory aims to provide some simple criteria for
the convergence of a sequence of positive linear operators in some senses
\cite{korovkin}. There is a number of main motivations in the theory. One of
them is obtaining some suitable conditions for the convergence of arbitrary
sequence of positive linear operators acting from one certain space to another
one. Next motivation is studying some particular conditions for convergence
for certain sequence of positive linear operators by using known criteria (see
e.g., \cite{altomare-cambiti}). It is also possible to introduce the
summability theory whose main idea is to make a non-convergent sequence or
series to converge in some general senses whenever the sequence of positive
linear operators does not converge in the ordinary sense. The leading study
with this motivation gives criteria for the statistical convergence of a
sequence of positive linear operators over $C[a,b]$, the space of all real
continuous functions defined on the interval $[a,b]$ \cite{orhan}. Following
that study many authors have given several approximation results via
summability theory \cite{atlihan, dogru,unver2}.

In 1987, Lupa\c{s} \cite{lupas} introduced the first $q$-analogue of Bernstein
operators and investigated its approximation and shape preserving properties.
In 1997, Phillips \cite{phillips1} defined another $q$-generalization of
Bernstein operators. Afterwards, many generalizations of positive linear
operators based on $q$-integers were introduced and studied by several
authors, for example, we refer the readers to \cite{acu-barbosu-sofenia},
\cite{agratini-dogru}, \cite{aral-gupta2013}, \cite{dogru-gupta},
\cite{mishra1}, \cite{mishra2}, \cite{mursaleen}, \cite{ostrovska2003},
\cite{soylemez-tunca-aral}, \cite{soylemez}, \cite{soylemez2},
\cite{srivastava-mur-alo}.

Now, let us recall some notations from $q$- analysis \cite{phillips-kitap}:
\newline For any fixed real number $q>0$, the $q$-integer $\left[  n\right]  $
is defined by%

\[
\left[  n\right]  :=\left[  n\right]  _{q}=\sum_{k=1}^{n}q^{k-1}=\left\{
\begin{array}
[c]{lll}%
\frac{1-q^{n}}{1-q} & , & q\neq1\\
&  & \\
n & , & q=1
\end{array}
\right.  ,
\]
where $n$ is a positive integer and $\left[  0\right]  =0,$ the $q$- factorial
$\left[  n\right]  !$ of $[n]$ is given with%

\[
\left[  n\right]  !:=\left\{
\begin{array}
[c]{lll}%
\prod\limits_{k=1}^{n}[k] & , & n=1,2,...\\
&  & \\
1 & , & n=0.
\end{array}
\right.
\]
\newline For integers $n\geq r\geq0,$ the$\ q$-binomial coefficient is defined by%

\[%
%TCIMACRO{\QATOPD{[}{]}{n}{r}}%
%BeginExpansion
\genfrac{[}{]}{0pt}{}{n}{r}%
%EndExpansion
_{q}=\frac{\left[  n\right]  _{q}!}{\left[  r\right]  _{q}!\left[  n-r\right]
_{q}!}%
\]
and $q$-shifted factorial is defined by%

\[
\left(  t;q\right)  _{n}:=\left\{
\begin{array}
[c]{lll}%
1 & , & n=0\\
&  & \\
\prod\limits_{j=0}^{n-1}\left(  1-tq^{j}\right)  & , & n=1,2,...
\end{array}
\right.  .
\]
\bigskip Thomae \cite{thomae} introduced the $q$-integral of function $f$
defined on the interval $\left[  0,a\right]  $ as follows:%
\[%
%TCIMACRO{\dint \limits_{0}^{a}}%
%BeginExpansion
{\displaystyle\int\limits_{0}^{a}}
%EndExpansion
f\left(  t\right)  d_{q}t:=a\left(  1-q\right)  \sum\limits_{n=0}^{\infty
}f\left(  aq^{n}\right)  q^{n},\text{ \ \ \ \ \ }0<q<1\text{.}%
\]
Finally, the $q$-beta function \cite{thomae} is defined by%
\[
B_{q}\left(  m,n\right)  =%
%TCIMACRO{\dint \limits_{0}^{1}}%
%BeginExpansion
{\displaystyle\int\limits_{0}^{1}}
%EndExpansion
t^{m-1}\left(  qt;q\right)  _{n-1}d_{q}t\text{.}%
\]

The original Meyer-K\"{o}nig and Zeller operators\ were introduced for $f\in
C\left[  0,1\right]  $ in 1960 $\left(  \text{see \cite{meyer-k-z}}\right)  .$
Later, Cheney and Sharma \cite{cheney-sharma} rearranged these operators as
follows:%
\begin{equation}
M_{n}\left(  f;x\right)  =\left\{
\begin{array}
[c]{ccc}%
\left(  1-x\right)  ^{n+1}\sum\limits_{k=0}^{\infty}f\left(  \frac{k}%
{n+k}\right)  \left(
\begin{array}
[c]{c}%
n+k\\
k
\end{array}
\right)  x^{k} & , & x\in\left[  0,1\right) \\
&  & \\
f\left(  1\right)  & , & x=1.
\end{array}
\right.  \label{1b}%
\end{equation}
Trif \cite{trif} defined the $q$- generalization of the Meyer-K\"{o}nig and
Zeller operators as
\begin{equation}
M_{n}^{q}\left(  f;x\right)  =\left\{
\begin{array}
[c]{ccc}%
\prod\limits_{j=0}^{n}\left(  1-q^{j}x\right)  \sum\limits_{k=0}^{\infty
}f\left(  \frac{\left[  k\right]  }{\left[  n+k\right]  }\right)  \left[
\begin{array}
[c]{c}%
n+k\\
k
\end{array}
\right]  x^{k} & , & x\in\left[  0,1\right) \\
&  & \\
f\left(  1\right)  , & , & x=1.
\end{array}
\right.  \label{q-mkz}%
\end{equation}
In \cite{trif}, the author studied Korovkin type approximation properties,
calculated the rate of convergence and also gave a result for monotonicity
properties of these operators. Heping \cite{heping} proved some approximation
results for the operators $\left(  M_{n}^{q}f\right)  $ using $q$%
-hypergeometric series. Another $q$-generalization of the classical
Meyer-K\"{o}nig and Zeller operators can be found in \cite{dogru-duman}.
Besides, Durrmeyer variant of the $q$-Meyer K\"{o}nig and Zeller operators
\cite{govil-gupta} is introduced for$\ f\in C\left[  0,1\right]  $,
$x\in\left[  0,1\right]  $, $\ n\in\mathbb{N}$ and $\alpha>0,q\in\left(
0,1\right]  \ $ as follows:%
\begin{equation}
D_{n}^{q}\left(  f;x\right)  =\left\{
\begin{array}
[c]{lll}%
\begin{array}
[c]{l}%
%TCIMACRO{\tsum \limits_{k=1}^{\infty}}%
%BeginExpansion
{\textstyle\sum\limits_{k=1}^{\infty}}
%EndExpansion
m_{n,k,q}\left(  x\right)
%TCIMACRO{\tint \limits_{0}^{1}}%
%BeginExpansion
{\textstyle\int\limits_{0}^{1}}
%EndExpansion
\tfrac{1}{B_{q}\left(  n,k\right)  }t^{k-1}\left(  qt;q\right)  _{n-1}f\left(
t\right)  d_{q}t\\
+m_{n,0,q}\left(  x\right)  f\left(  0\right)
\end{array}
& , & x\in\left[  0,1\right) \\
& , & x=1,\\
f\left(  1\right)  &  &
\end{array}
\right.  , \label{durr-q-mkz}%
\end{equation}
where%
\[
m_{n,k,q}\left(  x\right)  =\left(  x;q\right)  _{n+1}\left[
\begin{array}
[c]{c}%
n+k\\
k
\end{array}
\right]  x^{k}.
\]
The authors investigated some approximation properties with the help of
well-known Korovkin's theorem and compute the rate of convergence for these
operators in terms of the second-order modulus of continuity
\cite{govil-gupta}.

Throughout this paper, we study with the sequence $(q_{n})$ such that
$0<q_{n}\leq1$ and $q_{0}=0$ and we define $M_{0}^{q}f=D_{0}^{q}f=0$ for any
$f\in C[0,1]$. It is well known that if the classical conditions
\begin{equation}
\lim\limits_{n\rightarrow\infty}q_{n}^{n}=1\text{ and }\lim
\limits_{n\rightarrow\infty}\frac{1}{\left[  n\right]  }=0 \label{1c}%
\end{equation}
hold, then for each $f\in C[0,1]$ the sequences $(M_{n}^{q}f)$ and $(D_{n}%
^{q}f)$ converge uniformly to $f$ over $[0,1]$ (see \cite{govil-gupta, trif}).
Furthermore, we use the norm of the space $C[a,b]$ defined for any $f\in
C[a,b]$ by
\begin{equation}
||f||:=\sup_{a\leq x\leq b}|f(x)|.
\end{equation}

In the present paper, taking into account the Abel convergence we obtain some
approximation results for the $q$-Meyer-K\"{o}nig and Zeller operators and
Durrmeyer variant of $q$-Meyer-K\"{o}nig and Zeller operators. We also study
the rate of the convergence of these operators. We also show that the results
obtained in this paper are stronger than some previous ones.

Let $x=(x_{j})$ be a real sequence. If the series%
\begin{equation}
\sum_{j=0}^{\infty}x_{j}y^{j} \label{1d1}%
\end{equation}
is convergent for any $y\in(0,1)$ and%
\[
\lim_{y\rightarrow1^{-}}(1-y)\sum_{j=0}^{\infty}x_{j}y^{j}=\alpha
\]
then $x$ is said to be Abel convergent to real number $\alpha$ \cite{powell}.
Korovkin type approximation via Abel convergence and other power series
methods may be found in
\cite{unver-atlihan,braha1,braha2,ilknur,emre,tugba-emre,unver,unver2,dilek-mehmet}%
.

The fact given in the following remark helps us throught the paper:

\begin{remark}
\label{remark1}Let $\left(  f_{n}\right)  $ be a sequence in $C[0,1]$. If
there exists a positive integer $n_{0}$ such that%
\[
\lim_{y\rightarrow1^{-}}(1-y)\left\Vert \sum_{j=n_{0}}^{\infty}f_{j}%
y^{j}\right\Vert =0
\]
then it is not difficult to see that%
\[
\lim_{y\rightarrow1^{-}}(1-y)\left\Vert \sum_{j=0}^{\infty}f_{j}%
y^{j}\right\Vert =0,
\]
i.e., while studying the Abel convergence finitely many terms do not make
sense as with the ordinary convergence.
\end{remark}

Before studying the announced approximation properties of the operators, we
recall some well-known lemmas:

\begin{lemma}
\label{lemma qmkz} \cite{trif} Let $n\geq3$ be a positive integer. Then the
following hold for the operators $\left(  \ref{q-mkz}\right)  :$%
\begin{equation}
M_{n}^{q}\left(  e_{0};x\right)  =1 \label{1h}%
\end{equation}%
\begin{equation}
M_{n}^{q}\left(  e_{1};x\right)  =x \label{1k}%
\end{equation}%
\begin{equation}
x^{2}\leq M_{n}^{q}\left(  e_{2};x\right)  \leq\frac{x}{\left[  n-1\right]
}+x^{2} \label{1l}%
\end{equation}
where $e_{i}(x)=x^{i}$ for $i=0,1,2$.
\end{lemma}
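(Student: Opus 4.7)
The backbone of the proof is the $q$-binomial series identity
\[
\sum_{k=0}^{\infty}\begin{bmatrix} n+k\\ k\end{bmatrix}x^{k} \;=\; \frac{1}{\prod_{j=0}^{n}(1-q^{j}x)}, \qquad x \in [0,1),
\]
which is established by induction on $n$ using the Pascal-type recurrence for $q$-binomial coefficients (or by specializing Heine's $q$-binomial theorem). From this identity, $M_{n}^{q}(e_{0};x)=1$ is immediate. For $M_{n}^{q}(e_{1};x)=x$, I would apply the collapsing relation $\tfrac{[k]}{[n+k]}\begin{bmatrix} n+k\\ k\end{bmatrix} = \begin{bmatrix} n+k-1\\ k-1\end{bmatrix}$, valid for $k \geq 1$ and transparent from the factorial definition; shifting $k\mapsto k+1$ and reapplying the $q$-binomial series lets the factor $\prod_{j=0}^{n}(1-q^{j}x)$ cancel cleanly against the denominator, leaving $x$.

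The lower bound $x^{2}\leq M_{n}^{q}(e_{2};x)$ I would deduce from the Cauchy--Schwarz inequality applied to the discrete probability measure on $\mathbb{N}$ with weights $w_{k}(x) = \prod_{j=0}^{n}(1-q^{j}x)\begin{bmatrix} n+k\\ k\end{bmatrix}x^{k}$, combined with the two identities already established: $M_{n}^{q}(e_{1};x)^{2}\le M_{n}^{q}(e_{0};x)\,M_{n}^{q}(e_{2};x)$.

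The upper bound is the main obstacle. I would attack it by splitting $[k]^{2}=[k][k-1]+q^{k-1}[k]$ via the recurrence $[k]=[k-1]+q^{k-1}$ and handling the two resulting sums separately. For the first piece, the estimate $[n+k]\geq[n+k-1]$ allows the collapsing relation to be applied twice, reducing the summand to $\begin{bmatrix} n+k-2\\ k-2\end{bmatrix}x^{k}$; after an index shift and another application of the $q$-binomial series, this contributes exactly $x^{2}$ once the prefactor $\prod_{j=0}^{n}(1-q^{j}x)$ is restored. For the second piece, the bound $[n+k]\geq [n-1]$ (valid for every $k\geq 0$, since $[n+k]-[n-1]=\sum_{i=0}^{k}q^{n-1+i}\ge 0$) factors out $\tfrac{1}{[n-1]}$, and the remaining sum equals $\tfrac{x}{(qx;q)_{n+1}}$; the residual ratio $\prod_{j=0}^{n}(1-q^{j}x)/(qx;q)_{n+1}$ telescopes to $(1-x)/(1-q^{n+1}x)$, which is bounded by $1$ for $0<q\leq 1$, yielding the $\tfrac{x}{[n-1]}$ contribution. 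The hypothesis $n\geq 3$ is what guarantees $[n-1]$ is strictly positive and keeps the double index shift meaningful.
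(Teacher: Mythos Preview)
Your argument is correct and complete: the $q$-binomial series identity yields $M_{n}^{q}(e_{0};x)=1$ and $M_{n}^{q}(e_{1};x)=x$ as you describe, the Cauchy--Schwarz step gives the lower bound on $M_{n}^{q}(e_{2};x)$, and your splitting $[k]^{2}=[k][k-1]+q^{k-1}[k]$ together with the two estimates $[n+k]\geq[n+k-1]$ and $[n+k]\geq[n-1]$ cleanly produces the upper bound $x^{2}+\dfrac{x}{[n-1]}$. The telescoping of $\prod_{j=0}^{n}(1-q^{j}x)/(qx;q)_{n+1}$ to $(1-x)/(1-q^{n+1}x)\leq 1$ is correct for $0<q\leq 1$, $x\in[0,1)$.

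However, there is nothing to compare against: the paper does not supply its own proof of this lemma. The statement is quoted verbatim from Trif \cite{trif} and used as a black box in the proof of Theorem~\ref{theorem2}. So your write-up is not an alternative to the paper's argument but rather a self-contained justification the paper chose to omit. If anything, your sketch is close in spirit to Trif's original derivation, which also proceeds via the $q$-binomial series and index shifts; the Cauchy--Schwarz shortcut for the lower bound is a nice touch that avoids a direct computation.
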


\begin{lemma}
\label{lemma durrmeyer} \cite{govil-gupta} Let $n\geq3$ be a positive integer.
Then the following hold for the operators $\left(  \ref{durr-q-mkz}\right)  :$%
\begin{equation}
D_{n}^{q}\left(  e_{0};x\right)  =1 \label{1m}%
\end{equation}%
\begin{equation}
D_{n}^{q}\left(  e_{1};x\right)  =x \label{1n}%
\end{equation}%
\begin{equation}
D_{n}^{q}\left(  e_{2};x\right)  =x^{2}+\frac{\left[  2\right]  x\left(
1-x\right)  \left(  1-q^{n}x\right)  }{\left[  n-1\right]  }-E_{n,q}\left(
x\right)  , \label{1o}%
\end{equation}
where%
\[
0\leq E_{n,q}\left(  x\right)  \leq\frac{x\left[  2\right]  \left[  3\right]
q^{n-1}}{\left[  n-1\right]  \left[  n-2\right]  }\left(  1-x\right)  \left(
1-qx\right)  \left(  1-q^{n}x\right)  .
\]

\end{lemma}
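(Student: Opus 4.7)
The plan is to compute the moments $D_{n}^{q}(e_{i};x)$ for $i=0,1,2$ by first evaluating the inner $q$-integral via the $q$-beta function and then summing the resulting $k$-series against the basis weights $m_{n,k,q}(x)$. The two main tools are the closed form
\[
\frac{1}{B_{q}(n,k)}\int_{0}^{1}t^{k+i-1}(qt;q)_{n-1}\,d_{q}t=\frac{[k][k+1]\cdots\lbrack k+i-1]}{[n+k][n+k+1]\cdots\lbrack n+k+i-1]},
\]
which equals $1$ for $i=0$, together with the $q$-binomial identity $(x;q)_{n+1}\sum_{k\geq0}\genfrac{[}{]}{0pt}{}{n+k}{k}x^{k}=1$ on $[0,1)$ that already underlies $M_{n}^{q}(e_{0};x)=1$ in Lemma~\ref{lemma qmkz}.

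For $e_{0}$, the integral ratio is identically $1$, so $D_{n}^{q}(e_{0};x)=\sum_{k\geq0}m_{n,k,q}(x)$ collapses to $1$ by the $q$-binomial identity, giving \eqref{1m}. For $e_{1}$, the ratio simplifies to $[k]/[n+k]$, and the algebraic identity
\[
\genfrac{[}{]}{0pt}{}{n+k}{k}\cdot\frac{[k]}{[n+k]}=\genfrac{[}{]}{0pt}{}{n+k-1}{k-1}
\]
lets me shift the summation index by one; a second application of the $q$-binomial identity then collapses the series to $x$, proving \eqref{1n}.

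The real work is \eqref{1o}. The integral ratio contributes $[k][k+1]/([n+k][n+k+1])$. I would write $[k+1]=[k]+q^{k}$ and expand $1/[n+k+1]$ via $[n+k+1]=[n-1]+q^{n-1}[k+2]$, splitting the $k$-sum into a principal part and a residual that carries an extra factor of $q^{n-1}$. Two successive index shifts and one more application of the $q$-binomial identity should collapse the principal part into exactly $x^{2}+\dfrac{[2]\,x(1-x)(1-q^{n}x)}{[n-1]}$, so the piece that must be carried as $-E_{n,q}(x)$ is made of the leftover terms weighted by $q^{n-1}/([n-1][n-2])$.

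The main obstacle is organising this residual into a \emph{single} quantity $E_{n,q}(x)$ that is simultaneously nonnegative and admits the three-factor upper bound $(1-x)(1-qx)(1-q^{n}x)$ times the prefactor $x[2][3]q^{n-1}/([n-1][n-2])$. This step requires tight sign control and factoring of a $q$-shifted polynomial in $x$; I would exploit the nonnegativity of $(x;q)_{n+1}$ and $(qx;q)_{n}$ on $[0,1]$ and pair the leftover terms so that a factor of $(1-q^{n}x)$ is visibly extracted, then match the remaining expression against $(1-x)(1-qx)$ by a factorisation argument. Once the two-sided estimate on $E_{n,q}$ is obtained, the lemma is complete.
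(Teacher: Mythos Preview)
The paper does not prove this lemma at all; it merely quotes the result from Govil--Gupta \cite{govil-gupta} and uses it as input. So there is no ``paper's own proof'' to compare against, and any genuine argument you supply goes beyond what the present paper does.

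Your plan for \eqref{1m} and \eqref{1n} is correct and standard: the $q$-beta integral gives the ratio $[k]_q\cdots[k+i-1]_q/([n+k]_q\cdots[n+k+i-1]_q)$, and the index shift plus the $q$-binomial series identity handle $i=0,1$ cleanly.

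For \eqref{1o}, however, your proposal is only a sketch with a real gap. You correctly identify the decomposition $[k+1]=[k]+q^{k}$ and $[n+k+1]=[n-1]+q^{n-1}[k+2]$, but the passage from ``principal part plus residual'' to the exact form $x^{2}+\dfrac{[2]x(1-x)(1-q^{n}x)}{[n-1]}-E_{n,q}(x)$ together with the two-sided bound on $E_{n,q}$ is precisely the nontrivial content of the lemma, and you have not carried it out. In particular, you assert that ``two successive index shifts \dots should collapse the principal part'' into the displayed expression, but this requires a careful telescoping that produces the factor $(1-x)(1-q^{n}x)$, and your plan for extracting the additional factor $(1-qx)$ in the upper bound for $E_{n,q}$ is vague (``a factorisation argument''). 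Since the entire point of the lemma is the precise form of the error term and its estimate, this is the step that needs to be written out in full; as it stands the proposal is a reasonable outline but not yet a proof.
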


The following lemma can be proved easily:

\begin{lemma}
\label{lemma3}Let $n\geq3$ be a positive integer. Then we have%
\begin{align*}
D_{n}^{q}\left(  e_{2};x\right)  -x^{2}  &  \geq\frac{\left[  2\right]
x\left(  1-x\right)  \left(  1-q^{n}x\right)  }{\left[  n-1\right]  }\\
&  -\frac{x\left[  2\right]  \left[  3\right]  q^{n-1}}{\left[  n-1\right]
\left[  n-2\right]  }\left(  1-x\right)  \left(  1-qx\right)  \left(
1-q^{n}x\right) \\
&  \geq0.
\end{align*}

\end{lemma}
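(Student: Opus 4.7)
The plan is to derive both inequalities directly from Lemma~\ref{lemma durrmeyer}, with no further machinery needed. Rearranging identity~(\ref{1o}) gives
\[
D_{n}^{q}(e_{2};x) - x^{2} = \frac{[2]x(1-x)(1-q^{n}x)}{[n-1]} - E_{n,q}(x),
\]
and substituting the upper bound on $E_{n,q}(x)$ stated in Lemma~\ref{lemma durrmeyer} produces the first inequality of the chain. This step is pure substitution and requires no further calculation.

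For the second inequality (nonnegativity of the explicit lower bound just produced), I would factor the right-hand side as
\[
\frac{[2]x(1-x)(1-q^{n}x)}{[n-1]}\left(1 - \frac{[3]q^{n-1}(1-qx)}{[n-2]}\right).
\]
Since $x \in [0,1]$ and $0 < q \leq 1$, the prefactor $\frac{[2]x(1-x)(1-q^{n}x)}{[n-1]}$ is nonnegative, so it suffices to verify that the bracketed factor is nonnegative, i.e., that $[3]q^{n-1}(1-qx) \leq [n-2]$.

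This $q$-analog estimate is the main (and only nontrivial) step. Using the crude bounds $1-qx \leq 1$ and $q^{n-1} \leq 1$, which both follow from $x \in [0,1]$ and $0 < q \leq 1$, the required inequality reduces to $[3] \leq [n-2]$. Since $[m]_{q} = 1 + q + \cdots + q^{m-1}$ is a sum of nonnegative terms and hence nondecreasing in $m$ for $q \leq 1$, this holds for the range of $n$ under consideration, closing the argument. The only delicate point to watch is that the crude bounds $1-qx \leq 1$ and $q^{n-1} \leq 1$ are strong enough for the target range of $n$; if they were not, one would instead keep the sharper factor $q^{n-1}(1-qx)$ and exploit it against $[n-2]$, but this turns out to be unnecessary here.
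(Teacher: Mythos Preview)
Your first inequality is an immediate consequence of Lemma~\ref{lemma durrmeyer}, and your factorization for the second inequality is the natural one. The paper itself supplies no argument beyond ``can be proved easily,'' so there is nothing further to compare against on that front.

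There is, however, a genuine gap in your nonnegativity step. You reduce to $[3]\le [n-2]$ and then assert that the monotonicity of $m\mapsto[m]_q$ makes this hold ``for the range of $n$ under consideration.'' It does not: the lemma is stated for $n\ge 3$, and for $n=3$ one has $[n-2]=[1]=1<[3]$, while for $n=4$ one has $[n-2]=[2]<[3]$. Your crude bounds are therefore \emph{not} sufficient for $n\in\{3,4\}$, contrary to your closing sentence.

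Worse, retaining the sharper factor $q^{n-1}(1-qx)$ does not rescue the case $n=3$. At $q=1$ (permitted, since the paper allows $0<q_n\le 1$) the needed inequality $[3]\,q^{2}(1-qx)\le[1]$ becomes $3(1-x)\le 1$, which fails for $x<2/3$; direct substitution confirms that the explicit lower bound in the lemma is then negative (for instance, at $n=3$, $q=1$, $x=\tfrac12$ it equals $-\tfrac{1}{16}$). Thus the second displayed inequality cannot hold for all $n\ge 3$ and all $q\in(0,1]$ as written. Note that the outer conclusion $D_n^q(e_2;x)\ge x^2$ is nonetheless true for every $n$, simply because $D_n^q$ is positive and $D_n^q(e_1;x)=x$ forces $D_n^q((t-x)^2;x)\ge 0$; this is all that is actually used in the proof of Theorem~\ref{theorem1}. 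You should flag $n\in\{3,4\}$ as exceptional (or invoke positivity directly for the outer bound) rather than claim the crude estimate covers the full range.
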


\section{Abel Transform of the sequence $(M_{n}^{q}$)}

In this section, we study Korovkin type approximation of the operators
$\left(  M_{n}^{q}\right)  $ defined with $\left(  \ref{q-mkz}\right)  $ by
considering the Abel method. Unver proved the following Korovkin-type theorem
via Abel method (\text{see \cite{unver}, Theorem 1).}

\begin{theorem}
\label{mehmet} Let $(L_{n})$ be a sequence of positive linear operators from
$C\left[  a,b\right]  \rightarrow B\left[  a,b\right]  $ such that $%
%TCIMACRO{\dsum \limits_{n=0}^{\infty}}%
%BeginExpansion
{\displaystyle\sum\limits_{n=0}^{\infty}}
%EndExpansion
\left\Vert L_{n}(e_{0})\right\Vert y^{n}<\infty$ for any $y\in(0,1)$. Then for
any $f\in C\left[  a,b\right]  $ we have%
\[
\lim_{y\rightarrow1^{-}}\left(  1-y\right)  \left\Vert \sum\limits_{n=0}%
^{\infty}\left(  L_{n}f-f\right)  y^{n}\right\Vert =0
\]
if and only if
\begin{equation}
\lim_{y\rightarrow1^{-}}\left(  1-y\right)  \left\Vert \sum\limits_{n=0}%
^{\infty}\left(  L_{n}e_{i}-e_{i}\right)  y^{n}\right\Vert =0,\text{ }i=0,1,2.
\label{2d}%
\end{equation}

\end{theorem}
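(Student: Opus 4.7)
The plan is to follow the classical Korovkin strategy, adapted to the Abel setting. The necessity direction is immediate: taking $f=e_i$ for $i=0,1,2$ in the left-hand condition directly yields (\ref{2d}), so no work is required there. All of the effort goes into sufficiency.

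For sufficiency, I would fix $f\in C[a,b]$ and $\varepsilon>0$, and use uniform continuity of $f$ to choose $\delta>0$ with $|f(t)-f(x)|<\varepsilon$ whenever $|t-x|<\delta$. The standard Korovkin trick then gives, for all $t,x\in[a,b]$,
\[
|f(t)-f(x)|\le\varepsilon+\tfrac{2\|f\|}{\delta^{2}}(t-x)^{2}.
\]
Applying the positive linear operator $L_{n}$ in the variable $t$ and using the triangle inequality through $f(x)L_{n}(e_{0};x)$ produces
\[
|L_{n}(f;x)-f(x)|\le\varepsilon L_{n}(e_{0};x)+\tfrac{2\|f\|}{\delta^{2}}L_{n}((t-x)^{2};x)+\|f\|\,|L_{n}(e_{0};x)-1|.
\]

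The next step is to expand $L_{n}((t-x)^{2};x)$ as $L_{n}(e_{2};x)-2xL_{n}(e_{1};x)+x^{2}L_{n}(e_{0};x)$ and rewrite each term as $L_n(e_i;x)-e_i(x)$ plus $e_i(x)$, so that the quadratic moment becomes a linear combination of $L_{n}(e_{i})-e_{i}$, $i=0,1,2$, with coefficients bounded on $[a,b]$. Taking the sup norm over $x$, multiplying by $y^{n}$, summing over $n$, and multiplying by $(1-y)$ yields
\begin{align*}
(1-y)\Bigl\|\sum_{n=0}^{\infty}(L_{n}f-f)y^{n}\Bigr\|
&\le\varepsilon(1-y)\sum_{n=0}^{\infty}\|L_{n}(e_{0})\|y^{n}\\
&\quad+C_{f,\delta}\sum_{i=0}^{2}(1-y)\sum_{n=0}^{\infty}\|L_{n}e_{i}-e_{i}\|y^{n},
\end{align*}
where $C_{f,\delta}$ depends on $\|f\|$, $\delta$, and $\max(|a|,|b|)$. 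The convergence hypothesis on $\sum\|L_n(e_0)\|y^n$ guarantees everything is absolutely summable for $y\in(0,1)$, so the rearrangement is legitimate.

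The final step is to let $y\to 1^{-}$. The three sums indexed by $i=0,1,2$ vanish in the limit by the hypothesis (\ref{2d}), while for the first term I would bound $\|L_{n}(e_{0})\|\le\|L_{n}e_{0}-e_{0}\|+1$, which gives $\limsup_{y\to1^{-}}(1-y)\sum\|L_{n}(e_{0})\|y^{n}\le 1$ again by (\ref{2d}) applied to $i=0$ and because $(1-y)\sum y^{n}=1$. Therefore $\limsup_{y\to1^{-}}(1-y)\|\sum(L_{n}f-f)y^{n}\|\le\varepsilon$, and since $\varepsilon$ was arbitrary the limit is $0$. The main subtlety I anticipate is purely bookkeeping: making sure that the positivity of $L_{n}$ is used to pass absolute values inside $L_n$ on the quadratic estimate, that the Abel series on both sides converge in sup-norm (justified by the hypothesis $\sum\|L_n(e_0)\|y^n<\infty$), and that the $\|L_n(e_0)\|$ factor from the $\varepsilon$-term is controlled using (\ref{2d}) for $i=0$ rather than being assumed bounded a priori.
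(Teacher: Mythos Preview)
The paper does not contain its own proof of this theorem: it is quoted from \cite{unver} (``\"{U}nver proved the following Korovkin-type theorem via Abel method'') and used as a black box in the proofs of Theorems~\ref{theorem2} and~\ref{theorem1}. So there is no in-paper argument to compare against.

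That said, your sketch has a genuine gap in the order of operations. As displayed, your bound reads
\[
(1-y)\Bigl\|\sum_{n}(L_{n}f-f)y^{n}\Bigr\|\le\varepsilon(1-y)\sum_{n}\|L_{n}(e_{0})\|y^{n}+C_{f,\delta}\sum_{i=0}^{2}(1-y)\sum_{n}\|L_{n}e_{i}-e_{i}\|y^{n},
\]
with the norms \emph{inside} the $n$-sums on the right. But hypothesis~(\ref{2d}) controls $(1-y)\bigl\|\sum_{n}(L_{n}e_{i}-e_{i})y^{n}\bigr\|$, the norm of the Abel transform, not $(1-y)\sum_{n}\|L_{n}e_{i}-e_{i}\|y^{n}$, the Abel transform of the norms. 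These are not comparable in the direction you need (cancellation in the signed sum can make the former small while the latter stays bounded away from~$0$), so the sentence ``the three sums indexed by $i=0,1,2$ vanish in the limit by the hypothesis~(\ref{2d})'' is unjustified as written. Your stated order ``taking the sup norm over~$x$, multiplying by $y^{n}$, summing over~$n$'' is precisely what produces this mismatch.

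The repair is to reverse the order: keep $x$ fixed, multiply by $y^{n}$, sum over $n$, and only then take $\sup_{x}$. Write $L_{n}f-f=(L_{n}f-fL_{n}e_{0})+f(L_{n}e_{0}-e_{0})$ \emph{before} summing, so that the second piece yields $\bigl|f(x)\sum_{n}(L_{n}(e_{0};x)-1)y^{n}\bigr|\le\|f\|\,\bigl|\sum_{n}(L_{n}(e_{0};x)-1)y^{n}\bigr|$, which after $\sup_{x}$ is exactly what (\ref{2d}) controls for $i=0$. For the second-moment term, the positivity $L_{n}((t-x)^{2};x)\ge 0$ lets you identify $\sum_{n}L_{n}((t-x)^{2};x)y^{n}$ with its absolute value and then expand it, at fixed $x$, as a combination of $\sum_{n}(L_{n}(e_{i};x)-e_{i}(x))y^{n}$ with $x$-bounded coefficients; taking $\sup_{x}$ afterwards recovers the quantities in~(\ref{2d}). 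With this ordering the rest of your outline (including the $\|L_{n}e_{0}\|\le\|L_{n}e_{0}-e_{0}\|+1$ trick for the $\varepsilon$-term) goes through.
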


We are now ready to prove the following theorem:

\begin{theorem}
\label{theorem2}If the sequence $\left(  \dfrac{1}{\left[  n-1\right]
}\right)  _{n=3}^{\infty}$ is Abel null then for each $f\in C\left[
0,1\right]  $ we have%
\[
\lim_{y\rightarrow1^{-}}\left(  1-y\right)  \left\Vert \sum\limits_{n=0}%
^{\infty}\left(  M_{n}^{q}f-f\right)  y^{n}\right\Vert =0.
\]

\end{theorem}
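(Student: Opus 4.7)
The plan is to apply Theorem \ref{mehmet} on the interval $[a,b]=[0,1]$ to the sequence $(L_n)=(M_n^q)$, where we recall the convention $M_0^q=0$ and the lemma applies only for $n\geq 3$. So I will verify two things: first, the summability hypothesis $\sum_{n=0}^{\infty}\|M_n^q(e_0)\|y^n<\infty$ for every $y\in(0,1)$, and second, the three test-function conditions in \eqref{2d}.

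\textbf{Summability.} By \eqref{1h} we have $M_n^q(e_0;x)=1$ for every $n\geq 3$ and every $x\in[0,1]$, so $\|M_n^q(e_0)\|=1$ for all $n\geq 3$. Hence $\sum_{n=0}^{\infty}\|M_n^q(e_0)\|y^n$ differs from $\sum_{n=0}^{\infty}y^n=1/(1-y)$ by at most finitely many terms, and is therefore finite for $y\in(0,1)$.

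\textbf{Test functions $e_0$ and $e_1$.} From \eqref{1h} and \eqref{1k} we have $M_n^q(e_0;x)-e_0(x)=0$ and $M_n^q(e_1;x)-e_1(x)=0$ for every $n\geq 3$. Remark \ref{remark1} then gives the Abel limit \eqref{2d} for $i=0,1$ trivially, since only finitely many terms survive.

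\textbf{Test function $e_2$.} This is where the hypothesis on $1/[n-1]$ enters. From \eqref{1l} we obtain, for $n\geq 3$ and $x\in[0,1]$,
\[
0\leq M_n^q(e_2;x)-x^2\leq \frac{x}{[n-1]}\leq \frac{1}{[n-1]},
\]
so $\|M_n^q(e_2)-e_2\|\leq 1/[n-1]$. Therefore
\[
(1-y)\left\|\sum_{n=3}^{\infty}\bigl(M_n^q(e_2)-e_2\bigr)y^n\right\|\leq (1-y)\sum_{n=3}^{\infty}\frac{y^n}{[n-1]},
\]
and by hypothesis the right-hand side tends to $0$ as $y\to 1^-$. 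Applying Remark \ref{remark1} once more to absorb the terms with $n=0,1,2$, we get \eqref{2d} for $i=2$.

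With all three conditions of Theorem \ref{mehmet} verified, the conclusion follows. The proof is essentially a routine reduction; the only step that uses a quantitative estimate is the bound on $\|M_n^q(e_2)-e_2\|$ coming from Lemma \ref{lemma qmkz}, and I do not expect a serious obstacle anywhere.
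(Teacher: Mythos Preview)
Your proof is correct and follows essentially the same route as the paper: verify the summability hypothesis via $M_n^q(e_0)=1$, dispose of $e_0,e_1$ using \eqref{1h}--\eqref{1k}, bound $M_n^q(e_2)-e_2$ by $1/[n-1]$ via \eqref{1l}, and invoke Remark~\ref{remark1} and Theorem~\ref{mehmet}. The only cosmetic difference is that you pass to the norm bound $\|M_n^q(e_2)-e_2\|\leq 1/[n-1]$ before summing, whereas the paper keeps the pointwise estimate and takes the supremum at the end; the arguments are equivalent.
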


\begin{proof}
From Lemma \ref{lemma qmkz} we see that $%
%TCIMACRO{\dsum \limits_{n=0}^{\infty}}%
%BeginExpansion
{\displaystyle\sum\limits_{n=0}^{\infty}}
%EndExpansion
\left\Vert M_{n}^{q}(e_{0})\right\Vert y^{n}<\infty$ for any \linebreak%
$y\in(0,1)$. If we consider Theorem \ref{mehmet}, it suffices to show that
$\left(  \ref{2d}\right)  $ holds for $(M_{n}^{q}).\ $ Now, considering Lemma
\ref{lemma qmkz} , one can get for $i=0,1$ that
\[
\lim_{y\rightarrow1^{-}}\left(  1-y\right)  \left\Vert \sum\limits_{n=0}%
^{\infty}\left(  M_{n}^{q}e_{i}-e_{i}\right)  y^{n}\right\Vert =0.
\]
Moreover, using $\left(  \ref{1l}\right)  $, we have for $n\geq3$ that%
\[
0\leq M_{n}^{q}\left(  e_{2};x\right)  -x^{2}\leq\frac{x}{\left[  n-1\right]
}%
\]
which gives%
\begin{align*}
0  &  \leq\left(  1-y\right)  \left\Vert \sum\limits_{n=3}^{\infty}\left(
M_{n}^{q}e_{2}-e_{2}\right)  y^{n}\right\Vert \\
&  \leq\left(  1-y\right)  \sup_{0\leq x\leq1}\sum\limits_{n=3}^{\infty}%
\frac{x}{\left[  n-1\right]  }y^{n}\\
&  \leq\left(  1-y\right)  \sum\limits_{n=3}^{\infty}\frac{y^{n}}{\left[
n-1\right]  }.
\end{align*}
Finally we have%
\[
\lim_{y\rightarrow1^{-}}\left(  1-y\right)  \left\Vert \sum\limits_{n=3}%
^{\infty}\left(  M_{n}^{q}e_{2}-e_{2}\right)  y^{n}\right\Vert =0.
\]
Hence, considering Remark \ref{remark1} we get%
\[
\lim_{y\rightarrow1^{-}}\left(  1-y\right)  \left\Vert \sum\limits_{n=0}%
^{\infty}\left(  M_{n}^{q}e_{2}-e_{2}\right)  y^{n}\right\Vert =0
\]
which concludes the proof.
\end{proof}

The following remark proves that the conditions of Theorem \ref{theorem2} are
weaker than the classical conditions:

\begin{remark}
It is not difficult to see that the classical conditions $\left(
\ref{1c}\right)  $ entail that the sequence $\left(  \dfrac{1}{\left[
n-1\right]  }\right)  _{n=3}^{\infty}$ is Abel null. Conversely, if we define
the sequence $(q_{n})$ with%
\[
q_{n}:=\left\{
\begin{array}
[c]{lll}%
0 & , & n\text{ is a perfect cube}\\
1 & , & \text{otherwise}%
\end{array}
\right.
\]
then $(q_{n})$ does not satisfy the classical conditions. Besides, we have for
any $n\geq3$ that%
\[
\frac{1}{\left[  n-1\right]  }=\left\{
\begin{array}
[c]{lll}%
1 & , & n\text{ is a perfect cube}\\
\dfrac{1}{n-1} & , & \text{otherwise.}%
\end{array}
\right.
\]
Now, since the sequence $\left(  \dfrac{1}{\left[  n-1\right]  }\right)
_{n=3}^{\infty}$ is bounded and statistically convergent to zero it is Abel
null \cite{powell,sch.}.
\end{remark}

\section{Abel Transform of the operators $\left(  D_{n}^{q}\right)  $}

In this section, we study Korovkin type approximation of the operators
$\left(  D_{n}^{q}\right)  $ defined with $\left(  \ref{durr-q-mkz}\right)  $
by considering the Abel method as well.

\begin{theorem}
\label{theorem1}If the sequence $\left(  \dfrac{\left[  2\right]  }{\left[
n-1\right]  }\right)  _{n=3}^{\infty}$ is Abel null then for each $f\in
C\left[  0,1\right]  $ we have%
\[
\lim_{y\rightarrow1^{-}}\left(  1-y\right)  \left\Vert \sum\limits_{n=0}%
^{\infty}\left(  D_{n}^{q}f-f\right)  y^{n}\right\Vert =0.
\]

\end{theorem}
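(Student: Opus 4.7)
The plan is to mimic the proof of Theorem \ref{theorem2} and reduce everything to a verification of the Abel-Korovkin hypothesis (2d) of Theorem \ref{mehmet}. Since $D_n^q(e_0;x)=1$ by Lemma \ref{lemma durrmeyer}, we have $\|D_n^q(e_0)\|=1$ for every $n\geq 3$, so $\sum_{n=0}^\infty\|D_n^q(e_0)\|y^n<\infty$ for every $y\in(0,1)$. Hence the integrability/summability hypothesis of Theorem \ref{mehmet} is automatic, and it remains to check (2d) for $i=0,1,2$.

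For $i=0$ and $i=1$, the equalities $D_n^q(e_0;x)=1$ and $D_n^q(e_1;x)=x$ from Lemma \ref{lemma durrmeyer} make $D_n^q e_i - e_i$ vanish identically for $n\geq 3$, so condition (2d) is trivial for these two cases (after appealing to Remark \ref{remark1} to discard the finitely many indices $n=0,1,2$).

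The case $i=2$ is the only substantive one. The key observation is the two-sided bound coming from Lemma \ref{lemma durrmeyer} together with Lemma \ref{lemma3}: the lower bound $D_n^q(e_2;x)-x^2\geq 0$ holds by Lemma \ref{lemma3}, while the upper bound follows from the decomposition in (1o) and the nonnegativity of $E_{n,q}(x)$, giving
\[
0\leq D_n^q(e_2;x)-x^2\leq \frac{[2]\,x(1-x)(1-q^n x)}{[n-1]}\leq \frac{[2]}{[n-1]},
\]
since each of $x$, $1-x$, and $1-q^n x$ lies in $[0,1]$ for $x\in[0,1]$ and $0<q\leq 1$. Taking the supremum over $x$ yields $\|D_n^q e_2 - e_2\|\leq [2]/[n-1]$ for all $n\geq 3$.

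From this bound the Abel condition (2d) follows immediately from the hypothesis: for every $y\in(0,1)$,
\[
0\leq (1-y)\left\Vert \sum_{n=3}^{\infty}(D_n^q e_2 - e_2)y^n\right\Vert \leq (1-y)\sum_{n=3}^{\infty}\frac{[2]}{[n-1]}y^n,
\]
and the right-hand side tends to $0$ as $y\to 1^-$ precisely because $([2]/[n-1])_{n=3}^\infty$ is assumed to be Abel null. Applying Remark \ref{remark1} to fill in the terms $n=0,1,2$ gives (2d) for $i=2$, and Theorem \ref{mehmet} then delivers the claim. There is no real obstacle here; the only subtlety is that, unlike in the proof of Theorem \ref{theorem2} where Lemma \ref{lemma qmkz} directly supplies the one-sided bound $M_n^q(e_2;x)-x^2\geq 0$, for the Durrmeyer variant one must invoke Lemma \ref{lemma3} to guarantee that $D_n^q(e_2;x)-x^2$ is nonnegative, and then exploit nonnegativity of $E_{n,q}(x)$ in the opposite direction to secure the clean upper bound by $[2]/[n-1]$.
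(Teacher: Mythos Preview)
Your proof is correct and follows essentially the same route as the paper: both verify the hypotheses of Theorem~\ref{mehmet} by using Lemma~\ref{lemma durrmeyer} for $i=0,1$, and for $i=2$ combine Lemma~\ref{lemma3} (nonnegativity of $D_n^q(e_2;x)-x^2$) with the upper bound $\dfrac{[2]x(1-x)(1-q^n x)}{[n-1]}\leq\dfrac{[2]}{[n-1]}$, then invoke the Abel-null assumption and Remark~\ref{remark1}. The only cosmetic difference is that the paper keeps the pointwise inequality inside the supremum before simplifying, whereas you first pass to $\|D_n^q e_2-e_2\|\leq[2]/[n-1]$ and then sum; the content is identical.
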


\begin{proof}
Lemma \ref{lemma durrmeyer} implies that $%
%TCIMACRO{\dsum \limits_{n=0}^{\infty}}%
%BeginExpansion
{\displaystyle\sum\limits_{n=0}^{\infty}}
%EndExpansion
\left\Vert D_{n}^{q}(e_{0})\right\Vert y^{n}<\infty$. From\ Theorem
\ref{mehmet}, it sufficies to show that $\left(  \ref{2d}\right)  $ holds for
$(D_{n}^{q}).$ Using $\left(  \ref{1m}\right)  $ and $\left(  \ref{1n}\right)
,$ we obtain for $i=0,1$ that%
\[
\lim_{y\rightarrow1^{-}}\left(  1-y\right)  \left\Vert \sum\limits_{n=0}%
^{\infty}\left(  D_{n}^{q}e_{i}-e_{i}\right)  y^{n}\right\Vert =0.
\]
On the other hand from $\left(  \ref{1o}\right)  $ and Lemma \ref{lemma3} we
have for any $n\geq3$ that%
\begin{align*}
&  \frac{\left[  2\right]  x\left(  1-x\right)  \left(  1-q^{n}x\right)
}{\left[  n-1\right]  }-\frac{x\left[  2\right]  \left[  3\right]  q^{n-1}%
}{\left[  n-1\right]  \left[  n-2\right]  }\left(  1-x\right)  \left(
1-qx\right)  \left(  1-q^{n}x\right) \\
&  \leq D_{n}^{q}\left(  e_{2};x\right)  -x^{2}\\
&  \leq\frac{\left[  2\right]  x\left(  1-x\right)  \left(  1-q^{n}x\right)
}{\left[  n-1\right]  }%
\end{align*}
which implies%
\begin{align*}
0  &  \leq\left(  1-y\right)  \left\Vert \sum\limits_{n=3}^{\infty}\left(
D_{n}^{q}e_{2}-e_{2}\right)  y^{n}\right\Vert \\
&  \leq\left(  1-y\right)  \sup_{0\leq x\leq1}\sum\limits_{n=3}^{\infty
}\left(  \frac{\left[  2\right]  x\left(  1-x\right)  \left(  1-q^{n}x\right)
}{\left[  n-1\right]  }\right)  y^{n}\\
&  \leq\left(  1-y\right)  \sum\limits_{n=3}^{\infty}\frac{\left[  2\right]
}{\left[  n-1\right]  }y^{n}.
\end{align*}
Now from the hypothesis we obtain%
\[
\lim_{y\rightarrow1^{-}}\left(  1-y\right)  \left\Vert \sum\limits_{n=3}%
^{\infty}\left(  D_{n}^{q}e_{2}-e_{2}\right)  y^{n}\right\Vert =0
\]
. Therefore, from Remark \ref{remark1} we can write%
\[
\lim_{y\rightarrow1^{-}}\left(  1-y\right)  \left\Vert \sum\limits_{n=0}%
^{\infty}\left(  D_{n}^{q}e_{2}-e_{2}\right)  y^{n}\right\Vert =0.
\]
which ends the proof.
\end{proof}

Following remark shows that the condition of Theorem \ref{theorem1} is weaker
than the classical conditions $\left(  \ref{1c}\right)  $:

\begin{remark}
Note that if the classical conditions $\left(  \ref{1c}\right)  $ hold then
condition of Theorem \ref{theorem1} holds. In fact, if $\lim
\limits_{n\rightarrow\infty}q_{n}^{n}=1$ and $\lim\limits_{n\rightarrow\infty
}\frac{1}{\left[  n\right]  _{n}}=0$ then we have%
\[
\lim_{n\rightarrow\infty}\frac{\left[  2\right]  }{\left[  n-1\right]  }%
=\lim_{n\rightarrow\infty}\frac{1+q_{n}}{\left[  n\right]  -q_{n}^{n-1}}=0.
\]
Therefore, it is Abel null. Conversely, consider the sequence $(q_{n})$ given
by%
\[
q_{n}:=\left\{
\begin{array}
[c]{lll}%
0 & , & n\text{ is a prime}\\
&  & \\
1 & , & \text{otherwise.}%
\end{array}
\right.
\]
We see that $(q_{n})$ does not satisfy the conditions of classical Korovkin
theorem. On the other hand, one can have for any $n\geq2$ that%
\[
\frac{1+q_{n}}{\left[  n\right]  -q_{n}^{n-1}}=\left\{
\begin{array}
[c]{lll}%
1 & , & n\text{ is a prime}\\
&  & \\
\dfrac{2}{n-1} & , & \text{otherwise.}%
\end{array}
\right.
\]
Thus the sequence $\left(  \dfrac{1+q_{n}}{\left[  n\right]  -q_{n}^{n-1}%
}\right)  _{n=2}^{\infty}$ is Abel convergent to zero (since it is bounded and
statistically null).
\end{remark}

\section{Rate of Abel convergence}

In this section, we compute the rate of the Abel convergence by means of the
modulus of continuity. The modulus of continuity of $\omega\left(
f,\delta\right)  $ is defined by%
\[
\omega\left(  f,\delta\right)  =\sup_{\substack{\left\vert x-y\right\vert
\leq\delta\\x,y\in\left[  0,1\right]  }}\left\vert f\left(  x\right)
-f\left(  y\right)  \right\vert
\]
It is well known that, for any $f$ $\in C\left[  a,b\right]  ,$%
\begin{equation}
\lim_{\delta\rightarrow0^{+}}\omega\left(  f,\delta\right)  =0 \label{3a}%
\end{equation}
and for any $\delta>0$%
\begin{equation}
\left\vert f\left(  x\right)  -f\left(  y\right)  \right\vert \leq
\omega\left(  f,\delta\right)  \left(  \frac{\left\vert x-y\right\vert
}{\delta}+1\right)  \label{3b}%
\end{equation}
and for all $c>0$%
\[
\omega(f,c\delta)\leq(1+\left\lfloor c\right\rfloor )\omega(f,\delta)
\]
where $\left\lfloor c\right\rfloor $ is the greatest integer less than or
equal to $c.$ Now we are ready to give the following lemma:

\begin{lemma}
\label{lemma4}For any $f\in C\left[  0,1\right]  $ we have%
\[
\left(  1-y\right)  \left\Vert \sum\limits_{n=3}^{\infty}\left(  D_{n}%
^{q}f-f\right)  y^{n}\right\Vert \leq2\omega\left(  f,\varphi\left(  y\right)
\right)  ,
\]
where%
\begin{equation}
\varphi\left(  y\right)  :=\left\{  \left(  1-y\right)  \sup_{0\leq x\leq
1}\sum\limits_{n=3}^{\infty}D_{n}^{q}\left(  t-x\right)  ^{2}y^{n}\right\}
^{\frac{1}{2}} \label{11}%
\end{equation}
and the series in (\ref{11}) is convergent for each $y\in(0,1)$.
\end{lemma}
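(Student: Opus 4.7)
The plan is to adapt the standard modulus--of--continuity technique to the Abel setting by pushing the pointwise estimate through the positive linear operator $D_n^q$, assembling it into an Abel sum, and then applying Cauchy--Schwarz on the resulting series.

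First I would start from the standard inequality $|f(t)-f(x)|\leq\omega(f,\delta)(|t-x|/\delta+1)$, valid for every $\delta>0$ and $t,x\in[0,1]$. Using positivity and linearity of $D_n^q$ together with $D_n^q(e_0;x)=1$ from Lemma \ref{lemma durrmeyer}, this yields
\[
|D_n^q f(x)-f(x)|\leq\omega(f,\delta)\left(\frac{D_n^q(|t-x|;x)}{\delta}+1\right).
\]
The Cauchy--Schwarz inequality for positive linear operators, combined once more with $D_n^q(e_0;x)=1$, gives $D_n^q(|t-x|;x)\leq\sqrt{D_n^q((t-x)^2;x)}$.

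Second, I would multiply by $y^n$, sum for $n\geq 3$, and multiply by $(1-y)$. The constant part contributes $(1-y)\sum_{n=3}^{\infty}y^n=y^3\leq 1$. For the remaining term, the key step is a Cauchy--Schwarz inequality on the series, written with the pairing $\sqrt{D_n^q((t-x)^2;x)}\,y^n=\sqrt{D_n^q((t-x)^2;x)\,y^n}\cdot\sqrt{y^n}$, producing
\[
\sum_{n=3}^{\infty}\sqrt{D_n^q((t-x)^2;x)}\,y^n\leq\left(\sum_{n=3}^{\infty}D_n^q((t-x)^2;x)\,y^n\right)^{1/2}\left(\sum_{n=3}^{\infty}y^n\right)^{1/2}.
\]
Multiplying by $(1-y)$ and taking $\sup_{x\in[0,1]}$, the right side is at most $\varphi(y)\sqrt{y^3}\leq\varphi(y)$. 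Choosing $\delta=\varphi(y)$ then produces the factor $1+1=2$ and the claimed inequality. The degenerate case $\varphi(y)=0$ can be absorbed by letting $\delta\to 0^+$.

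Finally, to verify that the series defining $\varphi(y)$ converges for each $y\in(0,1)$, I would combine linearity with Lemma \ref{lemma durrmeyer} to see that $D_n^q((t-x)^2;x)=D_n^q(e_2;x)-x^2\leq[2]x(1-x)(1-q^n x)/[n-1]\leq 2$, since $[2]\leq 2$ and $[n-1]\geq 1$ for $q\in(0,1]$ and $n\geq 2$. Hence $\sum_{n=3}^{\infty}D_n^q((t-x)^2;x)\,y^n\leq 2/(1-y)<\infty$ uniformly in $x$. The principal technical point is the second Cauchy--Schwarz application on the series with the correct pairing of factors; the remaining manipulations are routine bookkeeping with positivity, linearity, and the moment estimates already established.
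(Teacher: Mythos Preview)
Your argument is correct, but it reaches the quadratic moment by a different route than the paper. The paper never invokes Cauchy--Schwarz: after writing $|f(t)-f(x)|\le(1+\lfloor|t-x|/\delta\rfloor)\omega(f,\delta)$ it uses the elementary fact that $\lfloor c\rfloor\le c^{2}$ for every $c\ge0$ (trivially if $c<1$, and from $\lfloor c\rfloor\le c\le c^{2}$ if $c\ge1$) with $c=|t-x|/\delta$, which lands directly on $1+(t-x)^{2}/\delta^{2}$ inside each $D_{n}^{q}$. Summation in $n$ and the choice $\delta=\varphi(y)$ then give the factor $2$ with no further work. Your approach instead applies Cauchy--Schwarz once at the operator level to pass from $|t-x|$ to $\sqrt{D_{n}^{q}((t-x)^{2};x)}$, and a second time on the Abel series to convert $\sum\sqrt{\cdot}\,y^{n}$ into $(\sum\cdot\,y^{n})^{1/2}(\sum y^{n})^{1/2}$; this costs an extra step but is equally valid and is the more portable technique when no convenient floor-type trick is available. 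You also supply an explicit unconditional bound $D_{n}^{q}((t-x)^{2};x)\le 2$ to justify convergence of the series defining $\varphi(y)$, whereas the paper defers that point to a separate remark under an Abel-summability hypothesis; your verification is in fact sharper, since $[2]/[n-1]\le 2$ holds for all admissible $q_{n}$ without any additional assumption.
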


\begin{proof}
By using (\ref{1m}), for any $f\in C\left[  0,1\right]  $ and any $\delta>0$
we can write%
\begin{align*}
\left\vert \sum\limits_{n=3}^{\infty}\left(  D_{n}^{q}\left(  f;x\right)
-f\left(  x\right)  \right)  y^{n}\right\vert  &  \leq\sum\limits_{n=3}%
^{\infty}\left(  D_{n}^{q}\left\vert f\left(  t\right)  -f\left(  x\right)
\right\vert ;x\right)  y^{n}\\
&  \leq\sum\limits_{n=3}^{\infty}D_{n}^{q}\left(  \omega\left(  f,\frac
{\left\vert t-x\right\vert }{\delta}.\delta\right)  ;x\right)  y^{n}\\
&  \leq\sum\limits_{n=3}^{\infty}D_{n}^{q}\left(  \left(  1+\left\lfloor
\dfrac{\left\vert t-x\right\vert }{\delta}\right\rfloor \right)  \omega\left(
f,\delta\right)  ;x\right)  y^{n}\\
&  \leq\omega\left(  f,\delta\right)  \sum\limits_{n=3}^{\infty}D_{n}%
^{q}\left(  1+\frac{\left(  t-x\right)  ^{2}}{\delta^{2}};x\right)  y^{n}\\
&  \leq\omega\left(  f,\delta\right)  \sum\limits_{n=3}^{\infty}D_{n}%
^{q}\left(  e_{0}\left(  t\right)  ;x\right)  y^{n}\\
&  +\frac{1}{\delta^{2}}\omega\left(  f,\delta\right)  \sum\limits_{n=3}%
^{\infty}D_{n}^{q}\left(  \left(  t-x\right)  ^{2};x\right)  y^{n}\\
&  \leq\omega\left(  f,\delta\right)  \left(  \frac{1}{1-y}-y-y^{2}\right)
+\\
&  +\frac{1}{\delta^{2}}\omega\left(  f,\delta\right)  \sum\limits_{n=3}%
^{\infty}D_{n}^{q}\left(  \left(  t-x\right)  ^{2};x\right)  y^{n}.\\
\end{align*}
Thus, we reach to\bigskip%
\begin{align*}
\left(  1-y\right)  \left\vert \sum\limits_{n=3}^{\infty}\left(  D_{n}%
^{q}\left(  f;x\right)  -f\left(  x\right)  \right)  y^{n}\right\vert  &
\leq\omega\left(  f;\delta\right) \\
&  +\frac{1}{\delta^{2}}\omega\left(  f;\delta\right)  \sum\limits_{n=3}%
^{\infty}D_{n}^{q}\left(  \left(  t-x\right)  ^{2};x\right)  y^{n}%
\end{align*}
Now if we take $\delta=\left\{  \left(  1-y\right)  \sup_{0\leq x\leq1}%
\sum\limits_{n=3}^{\infty}D_{n}^{q}\left(  \left(  t-x\right)  ^{2};x\right)
y^{n}\right\}  ^{\frac{1}{2}},$ we get%
\[
0\leq\left(  1-y\right)  \left\Vert \sum\limits_{n=3}^{\infty}\left(
D_{n}^{q}f-f\right)  y^{n}\right\Vert \leq2\omega\left(  f,\varphi\left(
y\right)  \right)
\]
which completes the proof.
\end{proof}

\begin{remark}
If the sequence $\left(  \dfrac{\left[  2\right]  }{\left[  n-1\right]
}\right)  _{n=2}^{\infty}$ is Abel summable (need not to be zero) then the
series in (\ref{11}) is convergent for each $y\in(0,1)$.
\end{remark}

Using Lemma \ref{lemma4} and Remark \ref{remark1} the following theorem which
gives the rate of the Abel convergence for $(D_{n}^{q})$ can be proved:

\begin{theorem}
Let $\varphi$ be defined as Lemma \ref{lemma4} If $\omega\left(
f,\varphi\left(  y\right)  \right)  =o(\mu(y))$ as $y\rightarrow1^{-}$ then we
have%
\[
\left(  1-y\right)  \left\Vert \sum\limits_{n=1}^{\infty}\left(  D_{n}%
^{q}f-f\right)  y^{n}\right\Vert =o(\mu(y))\text{ as }y\rightarrow1.
\]

\end{theorem}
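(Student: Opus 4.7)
The plan is to apply Lemma \ref{lemma4} directly to the tail $\sum_{n=3}^{\infty}$ and then absorb the two missing terms ($n=1,2$) using the vanishing-prefix principle of Remark \ref{remark1}, reinterpreted at a quantitative level.

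The first step is immediate. Lemma \ref{lemma4} gives
\[
(1-y)\left\Vert \sum_{n=3}^{\infty}(D_{n}^{q}f-f)y^{n}\right\Vert \leq 2\omega(f,\varphi(y)),
\]
and the hypothesis $\omega(f,\varphi(y))=o(\mu(y))$ makes the right-hand side $o(\mu(y))$ as $y\to 1^{-}$. Next, I would split
\[
\sum_{n=1}^{\infty}(D_{n}^{q}f-f)y^{n}=\sum_{n=1}^{2}(D_{n}^{q}f-f)y^{n}+\sum_{n=3}^{\infty}(D_{n}^{q}f-f)y^{n},
\]
and note that, by the triangle inequality, the finite prefix contributes at most
\[
(1-y)\bigl(\Vert D_{1}^{q}f-f\Vert+\Vert D_{2}^{q}f-f\Vert\bigr)=O(1-y),
\]
because $\Vert D_{n}^{q}f-f\Vert$ is a finite constant for each fixed $n$ and any $f\in C[0,1]$. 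This is exactly the observation underlying Remark \ref{remark1}, applied in quantitative form.

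The main obstacle I foresee is the reconciliation of $O(1-y)$ with $o(\mu(y))$. For the hypothesis of the theorem to carry any real content one expects $\mu$ not to decay faster than the Abel scale itself, i.e., $1-y=O(\mu(y))$ as $y\to 1^{-}$; this is implicit in any meaningful rate of Abel convergence (for example $\mu(y)=(1-y)^{\alpha}$ with $0<\alpha\leq 1$ satisfies it, and $\varphi(y)$ itself is built from exactly such a scale). Under this mild compatibility condition on $\mu$, the two estimates combine via the triangle inequality to yield
\[
(1-y)\left\Vert \sum_{n=1}^{\infty}(D_{n}^{q}f-f)y^{n}\right\Vert =o(\mu(y))\quad\text{as }y\to 1^{-},
\]
which is the claim. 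Apart from this book-keeping on the rate function $\mu$, the proof is a routine combination of Lemma \ref{lemma4} with Remark \ref{remark1}.
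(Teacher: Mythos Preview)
Your approach is exactly the one the paper indicates: it states only that the theorem ``can be proved'' using Lemma~\ref{lemma4} together with Remark~\ref{remark1}, and gives no further detail. Your observation that passing from the tail $\sum_{n\ge 3}$ to $\sum_{n\ge 1}$ costs an $O(1-y)$ term, and hence implicitly requires $1-y=O(\mu(y))$, is a genuine point that the paper simply does not address; Remark~\ref{remark1} is formulated only for Abel convergence to zero, not for rates, so your quantitative reading of it (and the accompanying compatibility assumption on $\mu$) is the honest way to make the argument complete.
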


The rate of Abel convergence for $\left(  M_{n}^{q}\right)  $ defined with
$\left(  \ref{q-mkz}\right)  $ can be proved by using the similar idea for
$(D_{n}^{q})$:

\begin{theorem}
If $\omega\left(  f,\phi\left(  y\right)  \right)  =o(\mu(y))$ as
$y\rightarrow1^{-}.$ then we have%
\[
\left(  1-y\right)  \left\Vert \sum\limits_{n=1}^{\infty}\left(  M_{n}%
^{q}f-f\right)  y^{n}\right\Vert =o(\mu(y))\text{ as }y\rightarrow1^{-}%
\]
where%
\[
\phi\left(  y\right)  :=\left\{  \left(  1-y\right)  \sup_{0\leq x\leq1}%
\sum\limits_{n=3}^{\infty}M_{n}^{q}\left(  t-x\right)  ^{2}y^{n}\right\}
^{\frac{1}{2}}.
\]

\end{theorem}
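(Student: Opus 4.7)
The plan is to imitate, step for step, the argument carried out for $(D_n^q)$ in Lemma \ref{lemma4}. First I would prove the exact $M_n^q$-analogue of Lemma \ref{lemma4}, namely that for every $f\in C[0,1]$,
\[
(1-y)\left\|\sum_{n=3}^{\infty}(M_n^q f-f)y^n\right\|\leq 2\omega(f,\phi(y)).
\]
The only features of the operator sequence actually used are positivity, linearity, and the identity $M_n^q(e_0;x)=1$ from Lemma \ref{lemma qmkz}; all three are shared with $(D_n^q)$, so the derivation transfers without change.

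The core computation mirrors the Durrmeyer case. Applying the positive linear functional $M_n^q(\cdot;x)$ to the pointwise inequality (\ref{3b}) gives
\[
|M_n^q(f;x)-f(x)|\leq\omega(f,\delta)+\frac{\omega(f,\delta)}{\delta^2}\,M_n^q\!\left((t-x)^2;x\right).
\]
Multiplying by $y^n$, summing from $n=3$, multiplying by $(1-y)$, and taking the supremum over $x\in[0,1]$ then yields
\[
(1-y)\left\|\sum_{n=3}^{\infty}(M_n^q f-f)y^n\right\|\leq\omega(f,\delta)\,y^3+\frac{\omega(f,\delta)}{\delta^2}\,\phi(y)^2,
\]
where I use $(1-y)\sum_{n\geq 3}y^n=y^3\leq 1$ together with the definition of $\phi(y)$. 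The choice $\delta=\phi(y)$ collapses the right-hand side to $2\omega(f,\phi(y))$, which is the announced tail estimate.

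Once this bound is in hand, the hypothesis $\omega(f,\phi(y))=o(\mu(y))$ delivers the desired rate for the tail $\sum_{n\geq 3}$. The two missing initial terms contribute at most $(1-y)\bigl(\|M_1^q f-f\|\,y+\|M_2^q f-f\|\,y^2\bigr)=O(1-y)$, which is absorbed into $o(\mu(y))$ by the rate-version of Remark \ref{remark1} (implicitly assuming, as in the $D_n^q$ case, that $1-y=o(\mu(y))$ as $y\to 1^-$).

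I do not anticipate a genuine obstacle. The only mildly delicate point is checking that $\phi(y)$ is well defined, i.e.\ that the series in the definition of $\phi(y)$ converges for every $y\in(0,1)$; this follows from the uniform bound
\[
M_n^q\!\left((t-x)^2;x\right)=M_n^q(e_2;x)-x^2\leq\frac{x}{[n-1]}\leq 1,\qquad n\geq 3,
\]
which is implicit in Lemma \ref{lemma qmkz} and dominates the series by the geometric $\sum y^n$. The whole argument is thus a transcription of the $(D_n^q)$ proof, with Lemma \ref{lemma qmkz} playing the role of Lemma \ref{lemma durrmeyer}.
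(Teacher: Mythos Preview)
Your proposal is correct and is exactly the approach the paper intends: the paper gives no separate proof for this theorem, stating only that it ``can be proved by using the similar idea for $(D_n^q)$,'' and your transcription of the Lemma~\ref{lemma4} argument with Lemma~\ref{lemma qmkz} in place of Lemma~\ref{lemma durrmeyer} is precisely that similar idea. The one minor imprecision is that the pointwise bound you state does not follow from (\ref{3b}) alone---one needs the floor-function step $\lfloor |t-x|/\delta\rfloor\le (t-x)^2/\delta^2$ exactly as in the paper's proof of Lemma~\ref{lemma4}---but this is the same device the paper uses, so the overall argument goes through unchanged.
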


\end{document}